\documentclass[12pt]{article}
\usepackage{amsmath}

\usepackage{amssymb}
\usepackage{graphicx}

\newtheorem{theorem}{Theorem}
\newtheorem{corollary}{Corollary}

\newtheorem{lemma}{Lemma}

\newenvironment{proof}[1][Proof]{\begin{trivlist}
\item[\hskip \labelsep {\bfseries #1}]}{\end{trivlist}}

\begin{document}

\begin{center}\Large
\textbf{New identities for the polarized partitions and partitions with $d$-distant parts}
\bigskip\large

Ivica Martinjak, Dragutin Svrtan

University of Zagreb

Zagreb, Croatia
\end{center}


\begin{abstract} 
In this paper we present a new class of integer partition identities. The number of partitions with $d$-distant parts can be represented as a sum of the number of partitions with 1-distant parts whose even parts are greater than twice the number of odd parts. We also provide a direct bijection between these classes of partitions.
\end{abstract}

\noindent {\bf Keywords:} integer partition, partition identity, direct bijection, Durfee square, Rogers-Ramanujan indentities.\\

\noindent {\bf AMS Mathematical Subject Classifications:} 05A17, 11P84

\section{Preliminaries and introduction }

A {\it partition} is the sequence $\lambda $=$(\lambda_1, \lambda_2,...,\lambda_l)$,  where $\lambda_i$ is a natural number $\forall i$, assuming $\lambda_1 \ge \lambda_2 \ge ... \ge \lambda_l$. Having $\sum_{i=1}^l \lambda_i$=$|\lambda|$=$n$ we say that $\lambda$ is a partition of $n$, denoted $\lambda \vdash n$. The numbers $\lambda_i$ are the {\it parts} of a partition, the number of parts $l(\lambda)$=$l$ is the {\it length} of a partition. The set of all partitions of $n$ is denoted by ${\cal P}_n$, with $p(n)=|{\cal P}_n|$ and the set of all partitions by ${\cal P}$. The number of partitions $\lambda \vdash n$ with exactly $k$ summands is denoted $p_k(n)$. The number of partitions $\lambda \vdash n$ with the smallest part $ \ge r$ we denote by $p(n,r)$.

Now we will define some special families of partitions. The set ${\cal D}_n$ consists of partitions $\lambda \vdash n$ with distant parts. We say that such a partition has 1-distant parts and we denote $p^{(1)}(n)=|{\cal D}_n|$. Similarly we set ${\cal D}_n^d = \{(\lambda_1, \lambda_2,...,\lambda_l) : \lambda_i-\lambda_{i+1} \ge d\}$, $p^{(d)}(n)=|{\cal D}_d|$. It is said that elements of ${\cal D}_n^2$ have 2-distant parts while elements of ${\cal D}_n^3$ have 3-distant parts.   

Let $l_{i,q}(\lambda)$ denote the number of parts of a partition $\lambda$ congruent to $i\pmod q$ and specially  $l_o(\lambda)=l_{1,2}(\lambda)$ is the number of odd parts of a partition. Let $\hat{p}^{(1)}(n)=|\{ \lambda \in {\cal D}_n :e(\lambda)>2l_o(\lambda)  \}|$, where $e(\lambda)$ is the smallest even part of a partition $\lambda \vdash n$.

It is known \cite{Bre} that the number of partitions of $\lambda \vdash n$ with $d$-distant parts equals the number of partitions $\mu \vdash n$ with $1$-distant parts fulfilling the following constraint: the smallest part that is congruent to $i\pmod d$ is greater than $$d\sum_{j=1}^{i-1} l_{i,q}(\mu),$$ $1 \leq i \leq d$. Namely, given a  partition with $d$-distant parts, subtract $q$ from the second smallest part, than $2q$, $3q$,... from the subsequent smallest parts. During this operation, the parts of resulting partition keeps the same congruence condition with respect to $q$ whereas the weight of the partition is decreased and the $d$-difference is lost. The opposite can be done in a unique way. On the other hand, the similar procedure lead to the same resulting partition starting with a partition with $1$-distant parts satisfying the above condition. This two facts prove the above identity.

Note that for $d$=$2$ the above sum reduces to only one term, resulting with $2l_o(\mu)$. In this case the number of partitions with $2$-distant parts equals the number of partitions with 1-distant parts and with every even part $e(\mu)$ greater than twice the number of odd parts. This polarization, expressed by the next theorem \cite{AE}, is of additional interest since it interprets the left hand side of the first Rogers-Ramanujan identity \cite{BoPa}\cite{MM}.

\begin{theorem}\label{Thm1}
The number of partitions $\lambda \vdash n$ with $2$-distant parts is equal to the number of partitions $\mu \vdash n$ with $1$-distant parts having the smallest even part greater than twice the number of odd parts
\begin{equation}
p^{(2)}(n)=\hat{p}^{(1)}(n).
\end{equation}
\end{theorem}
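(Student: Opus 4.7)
The plan is to derive Theorem~\ref{Thm1} as the $d=2$ instance of the Bressoud-type identity recalled in the paragraph preceding it. The essential bijective content---the subtraction procedure on $d$-distant partitions, its unique inverse, and the parallel procedure on 1-distant partitions satisfying the congruence-based smallest-part bounds---has already been laid out. What remains is to verify that those bounds collapse to the precise form $e(\mu) > 2l_o(\mu)$ when $d=2$.

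First I would split the constraint on a 1-distant partition $\mu$ by residue class modulo 2. The required bound for the smallest part congruent to $i \pmod 2$ is $2\sum_{j=1}^{i-1} l_{j,2}(\mu)$. For $i=1$ this sum is empty and the bound is $0$, which any positive part trivially exceeds; so the odd-residue condition imposes no restriction whatsoever. For $i=2$ the sum has a single term, $l_{1,2}(\mu) = l_o(\mu)$, and the bound is $2l_o(\mu)$. Hence the only surviving inequality is $e(\mu) > 2l_o(\mu)$, which by definition picks out exactly the set counted by $\hat{p}^{(1)}(n)$.

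Combining the two observations, Bressoud's identity at $d=2$ specializes to
\begin{equation*}
p^{(2)}(n) \; = \; \bigl|\{\mu \in {\cal D}_n : e(\mu) > 2l_o(\mu)\}\bigr| \; = \; \hat{p}^{(1)}(n),
\end{equation*}
which is the theorem. A subtlety to address is the treatment of boundary cases: when $\mu$ has no even parts the quantity $e(\mu)$ is undefined and the inequality should be read as vacuously true; when $\mu$ has no odd parts, $l_o(\mu) = 0$ and the inequality reduces to $e(\mu) > 0$, automatically satisfied by any positive even part. Both conventions are needed so that partitions consisting entirely of odd parts, or entirely of even 1-distant parts, are correctly included in $\hat{p}^{(1)}(n)$.

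No genuinely hard step arises in this reduction, because the bijective work has been delegated to the general Bressoud identity quoted just above the theorem. The main obstacle, such as it is, lies in verifying the boundary conventions so that the two sides agree on the nose rather than up to an off-by-one in the trivial strata. Beyond this, any effort to give a self-contained, \emph{direct} bijection---as promised in the abstract---would amount to composing the two halves of the Bressoud procedure at $d=2$ and presenting the result in closed form, which I would leave to a separate section rather than try to extract here.
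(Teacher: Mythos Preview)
Your argument is correct, but it is not the proof the paper gives. You specialize the general Bressoud identity quoted just above the theorem to $d=2$ and observe (as the paper itself does in the lead-in paragraph) that the congruence bounds collapse to the single inequality $e(\mu)>2l_o(\mu)$. The paper, by contrast, supplies a \emph{self-contained direct bijection}: draw the Young diagram of a $2$-distant partition $\lambda$ with each successive row indented two cells further; separate off the staircase $1,3,5,\dots,2l-1$ to the left of a vertical justification line; then sort the rows to the right of that line so that the odd residues come first (in decreasing order) followed by the even residues. A parity and size count shows the resulting partition has $1$-distant parts with $e(\mu)\ge 2l_o(\mu)+2$, and the inverse is recovered by checking that the condition $e(\mu)>2l_o(\mu)$ forces every row to reach the justification line.

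What each buys: your route is shorter and leans on a result already stated, which is perfectly legitimate for establishing the equality. The paper's route does more work but produces an explicit, elementary bijection that is then \emph{reused} as the engine of Theorems~\ref{Thm2} and~\ref{Thm3}: those proofs extract a Sylvester triangle and then invoke ``the same reasoning as in Theorem~\ref{Thm1}.'' If you rely only on the Bressoud specialization, the later proofs lose their foundation, so for the paper's internal architecture the direct diagrammatic argument is essential.
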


\begin{proof}
The proof is illustrated by the Young diagrams in Figure \ref{Fig1}. Arrange the diagram in a way the left margine has two dots extra identation per row. Draw the line of justification (of the original diagram) on such a way that in the last row one point remains on the left side of the line. Now rearrange the rows on the right side of the normal line: firstly put odd parts in descending order and then even parts in decreasing order too. As a result we have a new partition starting with even parts. Every row has size at least $2k$--1, where $k$ is counted from the bottom of the diagram. This means, if diagram possessed $k$ odd parts then the smallest even part is at least ($2k$-$1$)+$2$+$1$. 

In order to prove that this correspondence is invertible, we start with adjusted left margin as in the previous consideration. Now we have to explain that all rows intersect the normal line. The condition $e(\mu)>2l_o(\mu)$ ensures that the smallest even part will intersect the line. Having in mind that even parts are mutually different by $2$ or more, all other even parts will intersect the justification line.
\end{proof}

\begin{figure}
\caption{The map (12,10,7,4,1) $\mapsto$ (12,10,6,5,1). Resulting partition consists of $1$-distant parts, having 3 even and two odd parts. \label{Fig1}}
\includegraphics[width=350pt]{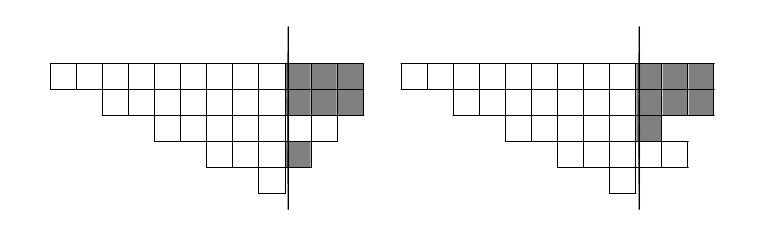}
\end{figure}

For the purpose to take an insight into the nature of partitions $\lambda \vdash n$ with $d$-distant parts, note that they are in one to one correspondence with the partitions $\mu$ such that:
\begin{description}
\item {\it i)} $\mu \in {\cal S}_{n}^d$, ${\cal S}_n^d=\{\mu : \mu \in {\cal P}_n, \mu_{min} \ge 1+(l(\mu)-1)q/2\}$ when $d$ is even, 
\item {\it ii)} $\mu \in {\cal S}_n^d \cap {\cal D}_n $ when $d$ is odd; $q=d-[d=odd]$.
\end{description}
Namely, $1+(1+d)+...+(1+(l-1)d)=l+d{ l \choose 2}=l(1+\frac{(l-1)d}{2})$, for $d$ even; and analogously when $d$ is odd. Let $p^{sd}(n)=|{\cal S}_n^d|$.

In particular, it holds
\begin{eqnarray}
p^{(2)}(n)&=&p^{s2}(n)\\
p^{(3)}(n)&=&p^{(1){s2}}(n)
\end{eqnarray}
meaning that partitions with 2-distant parts are equinumerous to the parititions having exactly one Durfee square. On the other hand, partitions with 3-distant parts are equinumerous to the parititions having exacty one Durfee square but with distant parts. These facts allow to represent $p^{(d)}(n)$ as the sum either of regular partitions (when $d$ is even) or partitions with 1-distant parts ($d$ odd). For $d=2$ and $3$ it follows immediately:
\begin{eqnarray}
p^{(2)}(n)&= &\sum_{i^2 \leq n} \sum_{j \leq i}   p_j(n-i^2)\\
p^{(3)}(n)&= &\sum_{i^2 <n} \sum_{j = i-1,i }   p^{(1)}_j(n-i^2).
\end{eqnarray}

\section{The maximal length of a $d$-distant partition}

Denote the length $l(\lambda)$ of the longest partition $\lambda \in {\cal D}_n^d$ by $l(n,d)$. Obviously, for the partitions having $2$-distant parts the minimal number $n=|\lambda|, \enspace \lambda \in {\cal D}_n^2 $, $l(n,2)$=$2$ is $4$ since $1+3=4$. The minimal $n$ with length $3$ is $9$ since $1+3+5=9$ etc. Thus, for every quadratic number $m$, the maximal length increases by $1$ in respect to the previous quadratic number, $l(m,2)=l((\sqrt{m}-1)^2,2)+1$. Consequently, it holds
\begin{eqnarray}
l(n,2)= \big\lfloor\sqrt{n} \big\rfloor.
\end{eqnarray}
This rule is generalized in the next lemma.

\begin{lemma}
Let $\lambda \in {\cal D}_n^d$. Then for the maximal lenght $l(n,d)$ of $\lambda \vdash n$ it holds
\begin{eqnarray}
l(n,d)= \Bigg\lfloor \frac{d-2+\sqrt{(d-2)^2+8dn}}{2d} \Bigg \rfloor.
\end{eqnarray}
\end{lemma}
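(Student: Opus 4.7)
The plan is to reduce the problem to a simple quadratic inequality. The key observation is that a partition with $d$-distant parts of length $l$ has smallest possible weight precisely when its parts are $1, 1+d, 1+2d, \ldots, 1+(l-1)d$. Any other partition in $\mathcal{D}^d$ of length $l$ is obtained from this minimal one by adding nonnegative increments (preserving the $d$-distance), so any weight $n \geq l + d\binom{l}{2}$ is achievable at length $l$ (for instance by dumping the surplus onto the largest part). Conversely, no partition in $\mathcal{D}_n^d$ of length $l$ can exist if $n < l + d\binom{l}{2}$.

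Hence $l(n,d)$ is the largest positive integer $l$ satisfying
\begin{equation*}
l + \frac{d\,l(l-1)}{2} \le n, \quad \text{i.e.,} \quad d\,l^2 + (2-d)\,l - 2n \le 0.
\end{equation*}
I would then simply apply the quadratic formula to the polynomial $f(l) = d l^2 + (2-d)l - 2n$, noting that its leading coefficient is positive and its discriminant $(d-2)^2 + 8dn$ is positive for $n \ge 1$ and $d \ge 1$, so $f(l) \le 0$ exactly on the interval between its two real roots. Since the smaller root is negative and only positive $l$ are relevant, the inequality becomes
\begin{equation*}
l \le \frac{(d-2) + \sqrt{(d-2)^2 + 8dn}}{2d}.
\end{equation*}
Taking the floor yields the stated formula.

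As a sanity check I would specialize to $d = 2$ and recover $l(n,2) = \lfloor \sqrt{n}\rfloor$, matching the motivating computation given just before the lemma. I do not expect a real obstacle; the only subtle point is confirming that the bound is actually attained (rather than merely being an upper bound), which follows from the explicit construction above: if $L := \lfloor (d-2+\sqrt{(d-2)^2+8dn})/(2d)\rfloor$, then $L + d\binom{L}{2} \le n$, and adding the nonnegative quantity $n - L - d\binom{L}{2}$ to the largest part of $(1,1+d,\ldots,1+(L-1)d)$ produces an explicit element of $\mathcal{D}_n^d$ of length $L$.
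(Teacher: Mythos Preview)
Your argument is correct and follows essentially the same route as the paper: identify the minimal-weight $d$-distant partition of length $l$ as $1,1+d,\ldots,1+(l-1)d$, compute its weight $\frac{l(dl-d+2)}{2}$, solve the resulting quadratic in $l$, and take the floor. Your write-up is in fact somewhat more careful than the paper's, since you explicitly verify attainability and include the $d=2$ sanity check.
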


\begin{proof}
Let $n$ be the smallest number with property $l(n,d)=l(n-1,d)+1$. Then, the value of the $d$-distant partition $\lambda$, $n \leq |\lambda| < n+q$, $q$ being non-negative integer, corresponds to the largest integer smaller than or equal to the sum of $m$ numbers $1, (1+d), (1+2d),...,(dm-(d-1))$. This sum equals $\frac{m(dm-d+2)}{2}$, which leads to the quadratic equation $dm^2-(d-2)m-2n=0$. Since $l(n,d)=\lfloor m \rfloor$ the statement of lemma follows immediately.
\end{proof}

\section{The main result}

The mentioned Bressoud's result, of which Theorem \ref{Thm1} is a special case, gives identities between partitions with $d$-distant parts and partitions with 1-distant parts having parts separated by certain congruence condition. In particular, when $d$=$2$ this condition is reduced to $\equiv$ $0,1\pmod 2$, separating a partition into even and odd parts. Here we extend these ideas of the polarization of partition. More precisely, we show that there is a one to one correspondence between partitons with $d$-distant parts, $d>2$ and partitions with 1-distant parts having even parts greater than twice the number of odd parts.

\begin{theorem}\label{Thm2}
The number of partitions $\lambda \vdash n$ with $3$-distant parts is equal to the sum of numbers of partition $\mu_i \vdash n-{ i \choose 2}$, $l(\mu_i)=i, \enspace i=1,...,l(n,3)$ with $1$-distant parts having the minimal even part greater than twice the number of odd parts
\begin{equation}
p^{(3)}(n)= \sum_{i \ge 1 } \hat{p}^{(1)}_i(n- { i \choose 2}). 
\end{equation}
\end{theorem}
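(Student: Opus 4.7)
The plan is to compose two bijections that together preserve the weight (up to a deterministic triangular shift) and the length. Step one strips a triangular number from a $3$-distant partition to produce a $2$-distant partition of the same length; step two invokes Theorem~\ref{Thm1}, provided that its bijection can be seen to preserve the number of parts.

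For step one, given $\lambda=(\lambda_1,\ldots,\lambda_i)\in{\cal D}_n^3$, I set $\mu_j=\lambda_j-(i-j)$ for $j=1,\ldots,i$. Then
$$\mu_j-\mu_{j+1}=(\lambda_j-\lambda_{j+1})-1\ge 2,$$
the length remains $i$, and $|\mu|=n-\sum_{j=1}^{i}(i-j)=n-\binom{i}{2}$. The map is inverted by adding $(i-1,i-2,\ldots,0)$ back to a $2$-distant partition of $n-\binom{i}{2}$ with $i$ parts. Partitioning ${\cal D}_n^3$ according to length therefore yields
$$p^{(3)}(n)=\sum_{i\ge 1}p_i^{(2)}\!\left(n-\binom{i}{2}\right),$$
where $p_i^{(2)}(m)$ counts $2$-distant partitions of $m$ with exactly $i$ parts and the sum effectively truncates at $i=l(n,3)$.

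For step two, I restrict the bijection of Theorem~\ref{Thm1} to each fixed length class. Reading that construction, the rows of the re-indented Young diagram are only reordered—odd-valued rows first and then even-valued rows, with the initial staircase of the indentation redistributed along the way—so no row is created or destroyed. Consequently $p_i^{(2)}(m)=\hat{p}^{(1)}_i(m)$ for every $i$ and $m$, and substituting this into the displayed identity finishes the proof.

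The only delicate point I foresee is this length-preservation claim, which requires a careful re-examination of Figure~\ref{Fig1} to make sure the row-permutation nature of the bijection is genuinely intrinsic to the construction and not an artefact of the particular example. Everything else reduces to a routine triangular shift, so the core of the argument rests squarely on checking that Theorem~\ref{Thm1} is in fact a bijection within each length stratum.
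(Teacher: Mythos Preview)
Your argument is correct and matches the paper's approach: both extract the triangular partition $\binom{i}{2}$ to reduce a $3$-distant partition of length $i$ to a $2$-distant partition of the same length, and then invoke the length-preserving bijection of Theorem~\ref{Thm1}. The paper carries this out in one pass on the indented Young diagram (the extra triangle appears to the right of the justification line precisely because the parts are $3$-distant rather than $2$-distant), whereas you separate the triangle-stripping and the Theorem~\ref{Thm1} step more explicitly, but the content is the same; your concern about length preservation is warranted and is indeed justified by the row-permutation nature of the construction.
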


\begin{proof}
We arange the starting partition $\lambda \vdash n$ in the same manner as in the proof of the Theorem \ref{Thm1}, obtaining on the left side of the line of justification rows shifted by two. Since the partition $\lambda$ consists of 3-distant parts this means that rows on the right side of vertical line are spaced at least by 1, as Figure \ref{Fig2} presents. In case the last row possesses two or more spaces, we have Sylvester triangle with side $l(\lambda)$ on the right side (see Corollary \ref{Cor1} and Figure \ref{Fig3}). However, this is not the general case but the triangle with side $l(\lambda)$--1.

Extracting the triangular partition $\nu_i \vdash \frac{i(i-1)}{2}, \enspace i=l(\lambda)$  form $\lambda$, we obtain partition $\mu_i \in {\cal D}_{|\lambda|-{ i \choose 2}}$. Furthermore, the proof is completed by the same reasoning as in the Theorem \ref{Thm1}.
\end{proof}

\begin{figure}
\caption{The partition $\lambda = (13,9,5,2)$ after extracting the Sylvester's triangle and rearranging rows is mapped to $\mu = (10,6,4,3), \enspace |\mu|=29-6=23$. \label{Fig2}}
\includegraphics[width=\textwidth]{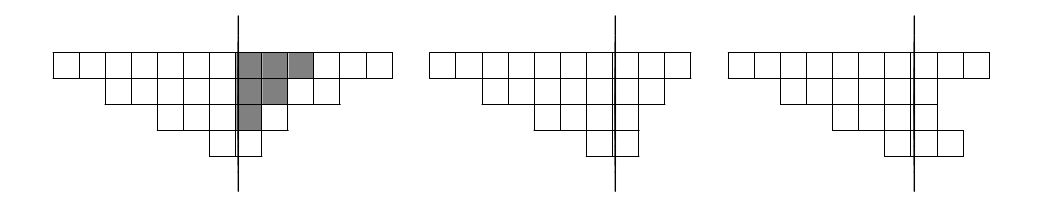}
\end{figure}

Thus, the number of partitions $\lambda \vdash n$ with 2-distant parts equals the sum of partitions $\mu_i \vdash m_i$ with 1-distant parts and with the smallest even part greater than $2l_o(\mu_i)$; $m_i=|\mu_i|=n-i(i-1)/2$, $\l(\mu_i)=i$ for every $i=1,2,...,l(n,3)$ i.e., terms in the sum represent the partition $\mu_i$ that arises by subtracting the Sylvester's triangle $\frac{i(i-1)}{2}$ from the $\lambda$.

As an example, we are going to calculate the number of partitions with 2-distant parts for $n$=$15$. Firstly note that the partition $\lambda =(n)$ will be mapped into itself, which means that the first term in the sum in Theorem \ref{Thm2} is always 1. So, according to the Theorem \ref{Thm2}, for this particular case it holds
\begin{eqnarray*}
p^{(3)}(15)&=&1+\hat{p}^{(1)}_2(14)+\hat{p}^{(1)}_3(12)\\
&=&1+6+3 \\
&=&10.
\end{eqnarray*}
The all 6 partitions of 14 of lenght 2 has even parts greater than the number of odd parts, whereas 3 out of 7 partitions of 12 having lenght 3 are in line with the condition.

\begin{corollary}\label{Cor1}
The number of partitions $\lambda \vdash n$ with $3$-distant parts and with the smallest part at least $2$ is equal to the sum of numbers of partition $\mu_i \vdash n-\frac{i(i+1)}{2}$, $l(\mu_i)=i, \enspace i=1,2,...$ with $1$-distant parts having the minimal even part greater than twice the number of odd parts
\begin{equation}
p^{(3)}(n, 2)= \sum_{i \ge 1 } \hat{p}^{(1)}_i(n- \frac{ i(i+1)}{2}). 
\end{equation}
\end{corollary}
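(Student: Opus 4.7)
The plan is to adapt the bijection from the proof of Theorem \ref{Thm2} almost verbatim, with one structural adjustment to the size of the extracted Sylvester triangle. Given $\lambda \in {\cal D}_n^3$ with smallest part $\ge 2$ and length $i = l(\lambda)$, I would arrange its Young diagram with rows indented by two cells per row, as in the proofs of Theorems \ref{Thm1} and \ref{Thm2}, placing the justification line so that one cell of the bottom row lies to its left.

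The key observation is a minimality calculation. In Theorem \ref{Thm2} the minimal length-$i$ partition in ${\cal D}^3$ is $(3i-2,3i-5,\dots,4,1)$, of weight $\frac{i(3i-1)}{2}$, and its rows to the right of the justification line form a Sylvester triangle of side $i-1$ with $\binom{i}{2}$ cells. Under the hypothesis that the smallest part is at least $2$, the minimal length-$i$ partition becomes $(3i-1,3i-4,\dots,5,2)$, of weight $\frac{i(3i+1)}{2}$, and each of its $i$ rows gains exactly one cell past the justification line. This enlarges the right-hand staircase to a Sylvester triangle of side $i$ with $\binom{i+1}{2}=\frac{i(i+1)}{2}$ cells. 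Extracting this triangle leaves rows that are $1$-distant on the right of the justification line (since $\lambda$ is $3$-distant), and the rearrangement from Theorem \ref{Thm1}, placing odd parts in decreasing order followed by even parts in decreasing order, yields a partition $\mu_i \vdash n-\frac{i(i+1)}{2}$ of length $i$ in ${\cal D}$ satisfying $e(\mu_i) > 2l_o(\mu_i)$.

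For invertibility, starting from such a $\mu_i$ I would reverse the rearrangement and re-attach the side-$i$ triangle on the right of the justification line. The condition $e(\mu_i) > 2l_o(\mu_i)$ ensures, exactly as in Theorem \ref{Thm1}, that the smallest even part crosses the justification line, and the $1$-distance condition propagates the crossing to all larger even parts. Attaching the side-$i$ triangle rather than the side-$(i-1)$ triangle from Theorem \ref{Thm2} contributes one additional cell to every row, which is precisely what forces the reconstructed $3$-distant partition to have smallest part at least $2$.

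The main obstacle is making precise that the smallest-part-$\ge 2$ hypothesis uniformly translates to an extra cell per row past the justification line, hence to a triangle of side $i$ rather than $i-1$. The minimality calculation above establishes this for the extremal length-$i$ partition, and the $3$-distance condition then guarantees that any larger length-$i$ partition only adds cells strictly to the right of the triangle without disturbing its triangular shape. Summing the resulting bijections over all $i \ge 1$ for which $n-\frac{i(i+1)}{2} \ge 0$ then yields the claimed identity.
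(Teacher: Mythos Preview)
Your proposal is correct and follows essentially the same route as the paper: the paper notes inside the proof of Theorem~\ref{Thm2} that when the last row has two or more cells to the right of the justification line (equivalently, smallest part $\ge 2$) the extracted Sylvester triangle has side $l(\lambda)$ rather than $l(\lambda)-1$, and Figure~\ref{Fig3} is cited as the entire justification of the corollary. Your write-up simply makes this observation explicit via the minimality calculation and spells out invertibility, which the paper leaves implicit.
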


\begin{figure}
\caption{When the smallest part of a partition with 3-distant parts is equal to or greater than $2$ the side of a Sylvester triangle equals the length of the starting partition. \label{Fig3}}
\begin{center}
\includegraphics[width=200pt]{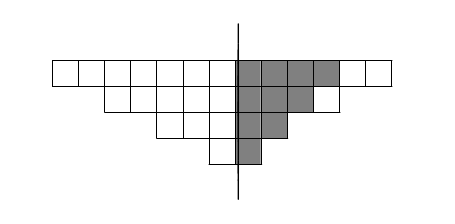}
\end{center}
\end{figure}

Analogy with the Theorem \ref{Thm2} gives identities for partitions with $d$-distant parts. For $d$=$4$ we have 
\begin{equation}
p^{(4)}(n)= \sum_{i \ge 1 } \hat{p}^{(1)}_i(n + i -i^2) 
\end{equation}
with the corollary
\begin{equation}
p^{(4)}(n,2)= \sum_{i \ge 1 } \hat{p}^{(1)}_i(n-  i^2) 
\end{equation}
As the Figure \ref{Fig4} illustrates, for the extracted partition it holds
\begin{eqnarray*}
1+2+3+...+(l-1)&=&{l \choose 2}\\
(d-2)+(2d-4)+(3d-6)+...+(l-1)(d-2)&=&(d-2){l \choose 2}\\
\end{eqnarray*}
for $d$=$3$ and for general case, respectively, where $l$ is the length of a starting partition. So, for any $d$ we keep $2$-distant parts on the left side of the justification line, which means that these parts are odd. After applying {\it i)}  subtracting partition $\nu_i \vdash (d-2){ i \choose 2}, \enspace i=1,...,l(n,d)$ and {\it ii)}  sorting parts on the right side of the justification line, we get a partition polarized into even and odd parts. This proves the next theorem.

\begin{figure}
\caption{Extracted partitions for $d=3,4,5$, respectively. \label{Fig4}}
\includegraphics[width=380pt]{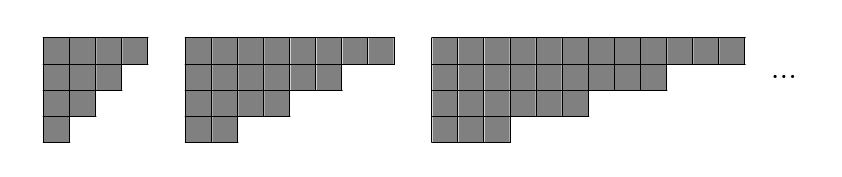}
\end{figure}

\begin{theorem}\label{Thm3}
Partitions $\lambda \vdash n$ with $d$-distant parts are equinumerous to partitions $\mu_i \vdash n-(d-2){ i \choose 2}$, $l(\mu_i)=i, \enspace i=1,...,l(n,d)$ with $1$-distant parts having minimal even part greater than twice the number of odd parts
\begin{equation}
p^{(d)}(n)= \sum_{i \ge 1 } \hat{p}^{(1)}_i(n- (d-2){ i \choose 2}). 
\end{equation}
\end{theorem}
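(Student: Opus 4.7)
The strategy is to factor the claimed bijection through the $d=2$ case already handled by Theorem~\ref{Thm1}. Given $\lambda \in {\cal D}_n^d$ with $l(\lambda)=i$, my first step is to produce a $2$-distant partition $\lambda'$ of weight $n - (d-2){i \choose 2}$ with the same length $i$, and my second step is to feed $\lambda'$ into Theorem~\ref{Thm1}. Summing the composition over all admissible lengths $i = 1, \ldots, l(n,d)$ then gives the claimed identity.

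For the first step, I define
$$\lambda'_k := \lambda_k - (d-2)(i-k), \qquad k=1,\ldots,i,$$
which subtracts the stretched staircase $0, d-2, 2(d-2),\ldots,(i-1)(d-2)$ from the parts of $\lambda$ read from the smallest to the largest. The total subtracted is $(d-2)\sum_{k=0}^{i-1}k = (d-2){i \choose 2}$, matching the arithmetic displayed just above Figure~\ref{Fig4}. A one-line check gives $\lambda'_k - \lambda'_{k+1} = (\lambda_k - \lambda_{k+1}) - (d-2) \ge 2$, and $\lambda'_i = \lambda_i \ge 1$, so $\lambda' \in {\cal D}^2_{n-(d-2){i \choose 2}}$ and $l(\lambda') = i$. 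The inverse (add back the stretched staircase, whose shape is determined by $l(\lambda')$) is well-defined on any $2$-distant partition of prescribed length, so this map is a length-preserving bijection onto its image.

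For the second step, I apply the bijection of Theorem~\ref{Thm1} to $\lambda'$. That construction is built row-by-row (shift the left margin, split at the justification line, rearrange the right-hand pieces by polarity, reglue), and in particular it permutes rows and preserves $l(\lambda')$. It therefore refines to a length-preserving bijection between ${\cal D}^2_m$-partitions with exactly $i$ parts and those $1$-distant $i$-part partitions of $m$ satisfying $e(\mu) > 2l_o(\mu)$, for any $m$. Applied to our $\lambda'$, it yields the required $\mu$. Composing with the first step gives a bijection between $\{\lambda \in {\cal D}_n^d : l(\lambda)=i\}$ and the set enumerated by $\hat p^{(1)}_i(n-(d-2){i \choose 2})$; summation over $i$ concludes the proof.

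The main obstacle is not the construction itself but the verification that Theorem~\ref{Thm1} truly holds in a length-refined form; this has to be extracted from its proof rather than read off from its statement. Concretely, I must confirm that in the Theorem~\ref{Thm1} construction the polarity-sorting of the right-hand rows is a genuine permutation (it never creates or destroys rows) and that the polarization inequality $e(\mu) > 2l_o(\mu)$ is inherited at the boundary row where the odd and even blocks meet. The degenerate cases $i = 1$ (empty subtraction) and partitions lacking an odd or even block on the right side of the justification line should be checked separately but behave as expected.
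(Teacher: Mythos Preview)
Your proof is correct and follows essentially the same route as the paper. The paper's argument (given in the paragraph immediately preceding the theorem) keeps the $2$-indentation on the left of the justification line, subtracts the $(d-2)$-scaled triangle $\nu_i \vdash (d-2){i \choose 2}$ from the right, and then sorts the remaining right-hand rows by parity; your two-step factoring through an explicit intermediate $2$-distant partition $\lambda'$ and then invoking Theorem~\ref{Thm1} performs exactly these operations, only with the intermediate object named and the length-preservation of Theorem~\ref{Thm1} made explicit.
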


Imagine a partition with $3$-distant parts and a partition with $3$-distant parts where the minimal part is at least $2$. According to the Theorem \ref{Thm2} and the Corollary \ref{Cor1} there is one to one correspondence between these partitions and a certain polarized partitions. Note that if the starting partitions are of the same length $l$ then the difference between extracted parts is exactly $l$. Obviously, the difference $l$ between extracted parts is always the case when starting partitions have mentioned characteristics. Thus, the generalization of the previous corollaries is as follows:
\begin{equation}
p^{(d)}(n, 2)= \sum_{i \ge 1} \hat{p}^{(1)}_i(n-i-  (d-2){ i \choose 2}). 
\end{equation}
A similar identity holds for the general case i.e., when there is any constraint on the minimal part of a partition. Let the starting partition $\lambda$, $l(\lambda)=l$ in our bijection has minimal part equal to $r$. Then the parts on the right side of the justification line that we subtract from the starting partition are $r-1, r-1+(d-2), r-1+2(d-2),...,r-1+(l-1)(d-2)$. In other words, we subtract $r-1$ rows of the hight $l$ more than in the case we consider in the previous theorem i.e., the subtracted part is $(r-1)l+  (d-2){ l \choose 2}$. This reasoning leads to the next corollary.

\begin{corollary}\label{Cor2}
Partitions $\lambda \vdash n$ with $d$-distant parts and with the smallest part at least $r$ are equinumerous to partitions $\mu_i \vdash n-(r-1)i-\frac{i(i+1)}{2}$, $\enspace i=1,2,...$ with $i$ $1$-distant parts having the minimal even part greater than twice the number of odd parts
\begin{equation}
p^{(d)}(n, r)= \sum_{i \ge 1} \hat{p}^{(1)}_i(n-(r-1)i-  (d-2){ i \choose 2}). 
\end{equation}
\end{corollary}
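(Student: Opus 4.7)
The plan is to adapt the bijective construction from the proof of Theorem \ref{Thm3} to the situation where the smallest part of $\lambda$ is bounded below by $r$. First, fix $\lambda \vdash n$ with $d$-distant parts and $\lambda_{l(\lambda)} \ge r$, and draw its Young diagram with the two-dot indentation per row used in the proofs of Theorems \ref{Thm1} and \ref{Thm3}. Because of the indentation together with the $d$-distance and the lower bound, each row extends at least $(r-1) + (k-1)(d-2)$ dots to the right of the bottom row's right end, where $k$ counts rows from the bottom. This is the pattern underlying Theorem \ref{Thm3}, but shifted to the right by exactly $r-1$ per row.

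Next, I would extract from the right of the justification line the sub-partition $\nu$ whose $k$-th part (from the bottom) equals $(r-1) + (k-1)(d-2)$, so that
\begin{equation*}
|\nu| = (r-1)l + (d-2){ l \choose 2},
\end{equation*}
where $l = l(\lambda)$. Geometrically $\nu$ is an $(r-1)\times l$ rectangle stacked on top of the Sylvester-type triangle of Theorem \ref{Thm3}, and the hypothesis $\lambda_{l(\lambda)} \ge r$ is precisely what ensures the rectangle fits inside $\lambda$. After the extraction, the remaining partition has the same two-dot indented shape treated in Theorem \ref{Thm3}, so applying the Theorem \ref{Thm1}-style sort (odd parts in decreasing order above, even parts in decreasing order below) produces a partition $\mu$ of $n - (r-1)l - (d-2){ l \choose 2}$ with $1$-distant parts and smallest even part exceeding $2 l_o(\mu)$.

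To finish, I would sum over all admissible lengths $i = l(\mu)$, which range over the same set as in Theorem \ref{Thm3}, yielding the displayed identity. The main obstacle is checking invertibility under the joint modification: given a target partition $\mu$ counted by $\hat{p}^{(1)}_i(n-(r-1)i-(d-2){ i \choose 2})$, one must reconstruct both the Sylvester-type triangle and the additional $(r-1)\times i$ rectangle and then verify that the reassembled partition lies in ${\cal D}_n^d$ with smallest part at least $r$. The first step is controlled by the inequality $e(\mu) > 2 l_o(\mu)$ exactly as in Theorem \ref{Thm1}, while the second follows from the observation that adding back $(r-1) + (k-1)(d-2)$ to the $k$-th row from the bottom simultaneously restores $d$-distance between consecutive parts and lifts the smallest part from $1$ up to $r$.
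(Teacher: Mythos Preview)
Your argument is correct and follows essentially the same route as the paper: the paper's proof (the paragraph immediately preceding the corollary) also subtracts the parts $r-1,\,r-1+(d-2),\,\ldots,\,r-1+(l-1)(d-2)$ on the right of the justification line, describing this as the Theorem~\ref{Thm3} extraction together with an extra $(r-1)\times l$ block, and then invokes the Theorem~\ref{Thm1} sort. Your write-up is in fact more complete, since you spell out the invertibility step that the paper leaves implicit.
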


Results demonstrated in this paper show that the number $p^{(d)}(n, r)$, $d \ge2, r \ge1, n\ge 0$ can be represented as the sum of numbers of polarized partitions of a certain natural number, with $k$ $1$-distant parts. In other words, there is a one to one correspondence between polarized partitions and partitions with $d$-distant parts and possibly with a constraint on the minimal part.

\end{document}